\documentclass[11pt]{article}
\usepackage{amsmath,amssymb,amsthm,booktabs,longtable}
\usepackage[margin=1in]{geometry}
\usepackage{xcolor}
\usepackage[hidelinks]{hyperref}
\newtheorem{theorem}{Theorem}

\newcommand{\R}{\mathbb R}

\newcommand{\kap}{\kappa}
\newcommand{\abs}[1]{\left\lvert#1\right\rvert}
\title{Improved lattice-packing bounds in small dimensions}
\author{Nina Zubrilina}
\date{\today}
\begin{document}
\maketitle
\begin{abstract}
We strengthen the upper bound on the density of a lattice packing in small dimensions. A lattice cannot have too many short vectors; we modify the Cohn--Elkies linear program accordingly by allowing the auxiliary function to be positive on a short interval beyond the minimum distance. For $11 \leq n \leq 16$, our bound is tighter than the three-point bound of Cohn, de Laat and Salmon. For $n \in [17, 23] \cup [25, 40] \cup [43, 57] \cup [62, 75]\cup [83, 91]$, our bound lies below the published numerical values of the Cohn--Elkies bound.
\end{abstract}
\newcommand{\CertificateCount}{82}
\newcommand{\ImprovementCount}{67}
\newcommand{\ImprovementDimensions}{11--23, 25--40, 43--57, 62--75, 83--91}
\newcommand{\SmallestMargins}{$0.0117\%$ at $n=62$, $0.0167\%$ at $n=91$, $0.0177\%$ at $n=75$, $0.0859\%$ at $n=83$}
\section{Introduction}
Let $\Lambda\subset\R^n$ be a full-rank lattice normalized to have shortest nonzero vector of length $1$, and write
$\rho(\Lambda)=1/\operatorname{vol}(\R^n/\Lambda)$. Placing balls of radius $1/2$ at the lattice points produces a packing of density $v_n2^{-n}\rho(\Lambda)$, where
$v_n=\pi^{n/2}/\Gamma(n/2+1)$ is the unit ball volume. The optimal lattice packing density is
\[
 \Delta_n^{\rm lat}=\frac{v_n}{2^n}\sup_\Lambda\rho(\Lambda).
\]

The Cohn--Elkies bound~\cite{CE} controls density through an auxiliary
function with nonnegative Fourier transform. In the lattice case,
Poisson summation bounds $\rho(\Lambda)\widehat f(0)$ by
$\sum_{v\in \Lambda}f(v)$, and the condition $f(x)\le0$ for $|x|\ge1$
makes the latter sum at most $f(0)$. 

For a lattice packing, we obtain additional constraints on which short vectors can cooccur. Martinet~\cite{Martinet} observed that each class of $\Lambda/2 \Lambda$ contains at most one pair $\{\pm v\}$ of length less than $\sqrt2$; pairs with lengths slightly above $\sqrt2$ have nearly orthogonal
directions, which allows us to bound their count as well. These restrictions allow
us to bound the total positive contribution from a class. 

Our modification always does at least as well as the Cohn--Elkies program, and in fact one can show that for $n$ sufficiently large, the two programs are equivalent. The improvement we get is in small dimensions, with diminishing returns as $n$ grows.  Further refinements of the class-counting constraints, including relations among classes modulo higher powers of \(2\), give small additional improvements; we omit those in this paper as they do not substantially narrow the gap to the best known lattice densities.

Section~\ref{sec:class-bound} states and proves the resulting bound.
Section~\ref{sec:certificates} describes the polynomial certificates
and the method of their verification. Finally, Section~\ref{sec:results}
presents the certified bounds and numerical comparisons.

The numerical exploration of the linear program was performed by GPT-$6$ Astra.

\section{The modified linear program}\label{sec:class-bound}

Let $n\ge2$ and $\sqrt2\le s<\min\{2,\sqrt{2/(1-n^{-1/2})}\}$.
For a nonzero class $C$ of $\Lambda/2\Lambda$ and $t\ge1$, we want to bound
\[
 N_C(t):=\frac12\#\{v\in C:|v|<t\},
\]
the number of opposite pairs in $C$ of length less than $t$.
We will bound $N_C(s)$ by $B(s)$, and, when $C$ contains a pair shorter
than $a$, bound the number of remaining pairs shorter than $s$ by $A(a,s)$.
To that end, let $c(s)=1-2/s^2$, so that $0\le c(s)<1/\sqrt n$, and define
\begin{equation}\label{eq:B}
 B(s)=\left\lfloor\frac{n(1-c(s)^2)}{1-nc(s)^2}\right\rfloor.
\end{equation}
For $1\le a\le\sqrt2$, set $A(a,s)=0$
when $a^2+s^2\le4$. Otherwise put
\[
 c_a=\frac{a^2+s^2-4}{2as},\qquad
 c'_a=\frac{c(s)+c_a^2}{1-c_a^2},
\]
and define
\begin{equation}\label{eq:A}
 A(a,s)=\begin{cases}
 \displaystyle\min\left\{B(s)-1,
 \left\lfloor\frac{(n-1)(1-(c'_a)^2)}{1-(n-1)(c'_a)^2}\right\rfloor\right\},
 &(n-1)(c'_a)^2<1,\\
 B(s)-1,&(n-1)(c'_a)^2\ge1.
 \end{cases}
\end{equation}

We will use the following class of step functions $G$. Choose an integer $M$ and partition points
$1=e_0<\cdots<e_M<\min\{2,\sqrt{2/(1-n^{-1/2})}\}$, with $e_J=\sqrt2$ for some $0<J<M$. Given heights $g_0,\ldots,g_{M-1}\ge0$, define
the step function $G:[0,\infty)\to[0,\infty)$ by
\begin{equation}\label{eq:weight}
 G(t)=\begin{cases}
 g_b,&e_b\le t<e_{b+1},\quad 0\le b<M,\\
 0,&\text{otherwise}.
 \end{cases}
\end{equation}
We require $g_J\ge\cdots\ge g_{M-1}$ and set $g_M=0$; no monotonicity
is imposed on the heights below $\sqrt2$. 

We define the \emph{$G$-budget} $\kap(G)$ by
\begin{align}
 \kap_\emptyset&=\sum_{b=J}^{M-1}(g_b-g_{b+1})B(e_{b+1}),\nonumber\\
 \kap_j&=g_j+\sum_{b=J}^{M-1}(g_b-g_{b+1})A(e_{j+1},e_{b+1})
 \quad(0\le j<J),\label{eq:kappa}\\
 \kap(G)&=\max\{\kap_\emptyset,\kap_0,\ldots,\kap_{J-1}\}.\nonumber
\end{align}
As proved below, this is a uniform upper bound on the total $G$-weight
of opposite pairs in any nonzero class $C$ of $\Lambda/2\Lambda$:
\[
 \frac12\sum_{v\in C}G(|v|)\le\kap(G).
\]
The factor $1/2$ counts each opposite pair once.
We normalize the Fourier transform as
\[
 \widehat h(y)=\int_{\R^n}h(x)e^{-2\pi i\langle x,y\rangle}\,dx.
\]

\begin{theorem}\label{thm:main}
Let $\Lambda\subset\R^n$ be a full-rank lattice of minimum length $1$, and let $G$ be as above. Suppose $h$ is a real
Schwartz function such that $\widehat h\ge0$, $\widehat h(0)>0$, and
\[
 h(x)\le G(\abs{x})\quad(1\le|x|<e_M),\qquad
 h(x)\le0\quad(|x|\ge e_M).
\]
Then
\begin{equation}\label{eq:main}
 \rho(\Lambda)\le\frac{h(0)+2(2^n-1)\kap(G)}{\widehat h(0)}.
\end{equation}
\end{theorem}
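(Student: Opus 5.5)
The proof splits into two steps. First comes Poisson summation together with a sum over classes of $\Lambda/2\Lambda$. Then we prove the class bound $\frac12\sum_{v\in C}G(|v|)\le\kap(G)$ for each nonzero class $C$, and this second step carries almost all of the difficulty.

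\emph{Step 1 (Poisson summation).} Since $h$ is Schwartz, Poisson summation gives
\[
 \rho(\Lambda)\sum_{y\in\Lambda^*}\widehat h(y)=\sum_{v\in\Lambda}h(v).
\]
All terms on the left are nonnegative because $\widehat h\ge0$, so the left side is at least $\rho(\Lambda)\widehat h(0)$. On the right, $|v|\ge1$ for every $v\ne0$. The hypotheses on $h$ then give $h(v)\le G(|v|)$ for $|v|<e_M$ and $h(v)\le0$ otherwise, and $G$ vanishes outside $[1,e_M)$. Hence $\sum_v h(v)\le h(0)+\sum_{v\ne0}G(|v|)$. The zero class of $\Lambda/2\Lambda$ contains no nonzero vector shorter than $2$, because such a vector would be $2w$ with $w\in\Lambda\setminus\{0\}$, and we have $e_M<2$. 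So only the $2^n-1$ nonzero classes contribute. Writing $\sum_{v\ne0}G(|v|)=\sum_{C\ne0}2\cdot\frac12\sum_{v\in C}G(|v|)$ and applying the class bound yields \eqref{eq:main}.

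\emph{Step 2 (class bound).} Fix a nonzero class $C$. By Martinet, $C$ contains at most one pair $\pm u$ with $|u|<\sqrt2$. For $u\ne\pm w$ in $C$ we have $(u\pm w)/2\in\Lambda\setminus\{0\}$, so $|u\pm w|\ge2$. Summing gives $|u|^2+|w|^2\ge4$, which proves Martinet's claim. It also gives $|\langle u,w\rangle|\le(|u|^2+|w|^2-4)/2$.

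Next we rewrite the tail of $G$ by Abel summation. Using $g_J\ge\cdots\ge g_{M-1}\ge g_M=0$, for $t\ge\sqrt2$ we have
\[
 G(t)=\sum_{b=J}^{M-1}(g_b-g_{b+1})\mathbf 1[t<e_{b+1}],
\]
and every coefficient is nonnegative. Therefore
\[
 \frac12\sum_{v\in C,\,|v|\ge\sqrt2}G(|v|)=\sum_{b}(g_b-g_{b+1})\,\#\{\text{pairs with }\sqrt2\le|v|<e_{b+1}\}.
\]

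\emph{Case 1.} If $C$ has no pair shorter than $\sqrt2$, it suffices to show $N_C(s)\le B(s)$ for each $s=e_{b+1}$. Take unit vectors $x_i=v_i/|v_i|$ for pairs with $|v_i|<s$. Then $|\langle x_i,x_j\rangle|\le(|v_i|^2+|v_j|^2-4)/(2|v_i||v_j|)$. With $|v_i|,|v_j|\in[\sqrt2,s)$ this is at most $1-2/s^2=c(s)$, since the expression is maximized at $|v_i|=|v_j|=s$ (this monotonicity needs a check). The relative bound for nearly orthogonal vectors is the Gram-matrix trace/rank argument. For $N$ unit vectors with pairwise $|\langle x_i,x_j\rangle|\le c$ and $nc^2<1$, it gives $N\le n(1-c^2)/(1-nc^2)$, from $\sum_{i,j}\langle x_i,x_j\rangle^2\ge N^2/n$. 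This yields $B(s)$.

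\emph{Case 2.} If $C$ has a pair $\pm u$ with $e_j\le|u|<e_{j+1}$ for some $j<J$, that pair contributes $g_j$. We then need the number of other pairs with $|v|<s$ to be at most $A(e_{j+1},s)$. When $a^2+s^2\le4$ with $a=e_{j+1}$, the inequality $|u|^2+|v|^2\ge4$ leaves no room, so $A=0$. Otherwise $|\langle \hat u,\hat v\rangle|\le c_a$. We project the other unit vectors onto $u^\perp$ and normalize. For two such vectors, the normalized projected inner product is at most $(c+c_a^2)/(1-c_a^2)=c'_a$ in absolute value, after checking that worst cases align. The Gram bound in dimension $n-1$ then gives the second entry of $A$. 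The bound $B(s)-1$ comes from Case 1 applied to all $\le B(s)$ vectors, including $u$, in a loose sense; this needs care, since $u$ is shorter than $\sqrt2$, but the coherence bound $c(s)$ still holds for pairs involving $u$. Taking the maximum over cases gives $\kap(G)$.

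The main obstacle is making these angle estimates rigorous. This means verifying the monotonicity claims that reduce to the endpoint lengths $s$ and $a$, including the fact that the Gram bound in Case 2 uses the open constraints $|u|<a$ and $|v|<s$ correctly. It also means confirming that the hypothesis $s<\sqrt{2/(1-n^{-1/2})}$ is exactly what makes $nc(s)^2<1$.
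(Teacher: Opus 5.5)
Your proposal is correct and follows essentially the same route as the paper: Poisson summation and discarding the zero class, Abel summation of the monotone tail of $G$, the Gram-matrix relative bound giving $B(s)$, and projection onto $u^\perp$ giving $A(a,s)$. The items you flag are all routine: $\partial_xF=(x^2-y^2+4)/(2x^2y)>0$ on $[1,2)^2$, and this also covers pairs involving the short vector $u$, so $N_C(s)\le B(s)$ holds in every class and yields $B(s)-1$ directly; $c(s)<n^{-1/2}$ is a rearrangement of $s^2<2/(1-n^{-1/2})$; and $c_a<1$ is equivalent to $(s-a)^2<4$, which holds since $0\le s-a<1$.
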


\begin{proof}
Following Cohn--Elkies, Poisson summation gives 
\[
 \rho(\Lambda)\widehat h(0)
 \le\rho(\Lambda)\sum_{w\in \Lambda^*}\widehat h(w)
 =\sum_{v\in \Lambda}h(v)
\]
and our goal is to show 
\[\sum_{v\in \Lambda \setminus \{0\}}h(v) \leq   2(2^n-1)\kap(G).\]

Suppose nonzero vectors $v,w\in\Lambda$ belong to the same class in
$\Lambda/2\Lambda$, with $v\ne\pm w$. Then $(v+w)/2$ and $(v-w)/2$
are in $\Lambda\setminus\{0\}$, hence
\begin{equation}\label{eq:coset-inner-product}
 |v\pm w|\ge2,\qquad
 |\langle v,w\rangle|\le\frac{|v|^2+|w|^2-4}{2}.
\end{equation}
In particular, $v$ and $w$ cannot both have length below $\sqrt2$.

The function $F(x,y)=(x^2+y^2-4)/(2xy)$ increases in each variable for
$1\le x,y<2$. Thus, for $|v|,|w|<s$,  $v \neq \pm w$, 
\[
 \left|\left\langle{v}/{|v|},{w}/{|w|}\right\rangle\right|
 \le F(|v|,|w|)\le F(s,s)=c(s).
\]
Now, let $u_1,\ldots,u_N\in\R^d$ be unit column vectors
with $|\langle u_i,u_j\rangle|\le c$ for $i\ne j$ and $dc^2<1$.
The positive semidefinite matrix $S=\sum_i u_i u_i^T$ satisfies
$\operatorname{tr}S=N$ and
$\operatorname{tr}(S^2)=\sum_{i,j}\langle u_i,u_j\rangle^2$.
Cauchy--Schwarz applied to its $d$ eigenvalues gives
\[
 N^2=(\operatorname{tr}S)^2\le d\operatorname{tr}(S^2)
 \le d\bigl(N+N(N-1)c^2\bigr)
\]
so $N\le d(1-c^2)/(1-dc^2)$. Hence
\begin{equation}\label{eq:class-count-B}
 N_C(s)\le B(s).
\end{equation}

Now suppose $N_C(a)=1$ for some $1\le a\le\sqrt2$, and let
$\{\pm v\}$ be this short pair. If $a^2+s^2\le4$, inequality~\eqref{eq:coset-inner-product}
excludes any additional pair of length less than $s$. Otherwise, let $u=v/|v|$, and choose one vector
$v_i$ from each of the $N_C(s)-1$ other opposite pairs of length less
than $s$ in $C$.
Monotonicity of $F$ gives $|\langle u_i,u\rangle|\le F(a,s)=c_a<1$
for $u_i=v_i/|v_i|$; here $c_a<1$ follows from $(s-a)^2<4$. For $\eta_i=\langle u_i,u\rangle$, the
normalized orthogonal projection of $u_i$ onto $u^\perp$ is
$z_i=(u_i-\eta_i u)/\sqrt{1-\eta_i^2}$. For $i\ne\ell$,
\[
 |\langle z_i,z_\ell\rangle|
 =\frac{|\langle u_i,u_\ell\rangle-\eta_i\eta_\ell|}
 {\sqrt{(1-\eta_i^2)(1-\eta_\ell^2)}}
 \le\frac{c(s)+c_a^2}{1-c_a^2}=c'_a.
\]
The projections are well defined because $|\eta_i|<1$. Applying the
relative bound for $d = n-1$ for $u^\perp$ when
$(n-1)(c'_a)^2<1$, and combining it with
$N_C(s)-1\le B(s)-1$ from~\eqref{eq:class-count-B}, gives
\begin{equation}\label{eq:class-count-A}
 N_C(s)-1\le A(a,s)\qquad\text{if }N_C(a)=1.
\end{equation}
When $(n-1)(c'_a)^2\ge1$, only the bound $B(s)-1$ is used;
when $a^2+s^2\le4$, the left side is zero.

Finally, we bound the sum of the auxiliary function over each nonzero
class $C$ of $\Lambda/2\Lambda$. Let $R_C$ contain one representative
of each opposite pair in $C$, and call a pair short if its length is
less than $\sqrt2$. Let $m_\ell$ count the pairs with lengths in
$[e_\ell,e_{\ell+1})$ for $J\le\ell<M$, and put
$N_b=\sum_{\ell=J}^b m_\ell=N_C(e_{b+1})-N_C(\sqrt2)$.
Summation by parts gives
\[
 \sum_{\ell=J}^{M-1}g_\ell m_\ell
 =\sum_{b=J}^{M-1}(g_b-g_{b+1})N_b.
\]
Since $g_b-g_{b+1}\ge0$, we may substitute upper bounds for $N_b$.
If $C$ has no short pair, then $N_C(\sqrt2)=0$, and
\eqref{eq:class-count-B} gives $N_b=N_C(e_{b+1})\le B(e_{b+1})$, so
\[
 \sum_{v\in R_C}G(|v|)
 =\sum_{\ell=J}^{M-1}g_\ell m_\ell
 \le\sum_{b=J}^{M-1}(g_b-g_{b+1})B(e_{b+1})
 =\kap_\emptyset.
\]
If its unique short pair lies in $[e_j,e_{j+1})$, then
$N_C(e_{j+1})=N_C(\sqrt2)=1$, and \eqref{eq:class-count-A} gives
$N_b=N_C(e_{b+1})-1\le A(e_{j+1},e_{b+1})$. Including that pair gives
\[
 \sum_{v\in R_C}G(|v|)
 =g_j+\sum_{\ell=J}^{M-1}g_\ell m_\ell
 \le g_j+\sum_{b=J}^{M-1}(g_b-g_{b+1})A(e_{j+1},e_{b+1})
 =\kap_j.
\]
Thus 
\[
 \sum_{v\in C}h(v) \le2\sum_{v\in R_C}G(|v|)\le2\kap(G).
\]
Every nonzero vector in $2\Lambda$ has length at least $2>e_M$ and hence contributes nonpositively, so summing over the $2^n-1$ nonzero classes and applying the opening Poisson inequality gives $\rho(\Lambda)\widehat h(0)\le h(0)+2(2^n-1)\kap(G)$, which proves the theorem.
\end{proof}

\section{Exact polynomial certificates}\label{sec:certificates}

For each dimension, we provide a \emph{certificate}: rational data that a
verification program, or \emph{verifier}, checks using exact arithmetic
to establish an upper bound on lattice packing density in $\R^n$ via
Theorem~\ref{thm:main}.

We construct radial auxiliary functions using the generalized Laguerre
polynomials $L_k^{(\alpha)}$. Set $\alpha=n/2-1$ and write
\[
 p(t)=\sum_{k=0}^K c_kL_k^{(\alpha)}(t),\qquad
 q(t)=\sum_{k=0}^K(-1)^kc_kL_k^{(\alpha)}(t).
\]
For $f(x)=p(2\pi|x|^2)e^{-\pi|x|^2}$, the Fourier transform is
$\widehat f(y)=q(2\pi|y|^2)e^{-\pi|y|^2}$, by the Fourier
eigenfunction identity in~\cite[Section~7]{CE}.

A certificate for a density bound consists of rational numbers $c_k$, $\tau_0>0$,
$E_b=e_b^2$, and $\Gamma_b\ge0$. Set
$\tau_b=\tau_0E_b$ and define a rescaling
\[
 h(x)=f\left(\sqrt{\frac{\tau_0}{2\pi}}\,x\right)
     =p(\tau_0|x|^2)e^{-\tau_0|x|^2/2}.
\]
Fourier scaling gives
\[
 \widehat h(y)=\left(\frac{2\pi}{\tau_0}\right)^{n/2}
 \widehat f\left(\sqrt{\frac{2\pi}{\tau_0}}\,y\right).
\]
The annulus $e_b\le|x|\le e_{b+1}$ now corresponds to
$\tau_b\le\tau_0|x|^2\le\tau_{b+1}$.

The verifiers check the partition-point conditions from Section~\ref{sec:class-bound} together with the following polynomial inequalities
\begin{align}
 q(t)&>0&& (t\ge0),\nonumber\\
 \Gamma_b\left(1+\frac{t-\tau_b}{2}+\frac{(t-\tau_b)^2}{8}\right)-p(t)&\ge0
 &&(\tau_b\le t\le\tau_{b+1}),\label{eq:checks}\\
 p(t)&<0&&(t\ge\tau_M).\nonumber
\end{align}
The middle condition is imposed for every $0\le b<M$, including both
endpoints. With $u=(t-\tau_b)/2\ge0$, the inequality
$1+u+u^2/2\le e^u$ shows that it implies
\[
 p(t)e^{-t/2}\le\Gamma_b e^{-\tau_b/2}.
\]
Thus these polynomial inequalities imply the required pointwise bounds
on $h$, as well as strict positivity of $\widehat h$.

To evaluate the budget $\kap(G)$ in~\eqref{eq:kappa} using rational arithmetic, put
\[
 R_b=\Gamma_b\left(\sum_{j=0}^{512}
                  \frac{(\tau_b/2)^j}{j!}\right)^{-1},\qquad
 g_b=\begin{cases}
 R_b,&b<J,\\
 \max_{b\le\ell<M}R_\ell,&b\ge J.
 \end{cases}
\]
The truncated exponential series gives
$R_b\ge\Gamma_be^{-\tau_b/2}$, and the backward maxima ensure the
required monotonicity without decreasing any height. Moreover,
\eqref{eq:B}--\eqref{eq:A} can be evaluated exactly from the squared
partition points: for example,
$c_a^2=(a^2+s^2-4)^2/(4a^2s^2)$ is rational. Consequently the resulting
$\kap$ and
\[
 \rho_+=\frac{p(0)+2(2^n-1)\kap}{q(0)}
\]
are rational. Since $\widehat h(0)=(2\pi/\tau_0)^{n/2}q(0)$,
Theorem~\ref{thm:main} gives
$\rho(\Lambda)\le(\tau_0/(2\pi))^{n/2}\rho_+$. Thus
\begin{equation}\label{eq:claims}
 \Delta_n^{\rm lat}\le
 \frac{\tau_0^{n/2}\rho_+}{8^{n/2}\Gamma(n/2+1)}.
\end{equation}

The reported decimal bounds are tested exactly. For even $n$, the
density expression in \eqref{eq:claims} is rational. For odd $n=2m+1$,
it is
\[
 \frac{2\tau_0^m\rho_+}{4^m(2m+1)!!}
       \sqrt{\frac{\tau_0}{8\pi}}.
\]
Choose a rational number $0<\xi<\pi$. A positive proposed
density bound in odd dimension is checked by squaring after replacing
$\pi$ by $\xi$. Positivity of each proposed decimal is checked before
squaring. The lower bound $\xi$ is
obtained from $\pi=16\arctan(1/5)-4\arctan(1/239)$ using the appropriate
upper and lower alternating Taylor sums.

Floating-point linear programming supplies candidate coefficients;
the exact sign tests and decimal comparisons establish their validity.
Following the data-and-verifier presentation of Cohn--Kumar~\cite{CK},
the exact certificates and verification code are available in the
\href{https://github.com/AstroNinja389/lattice-packing-bound}{\textcolor{blue}{\underline{GitHub repository}}}.
Four independent implementations use real-root counting, integer Taylor
bounds, Bernstein coefficients, and Descartes' rule of signs; the last
applies to even dimensions. Each reconstructs the polynomials, counting
bounds, and budget from the certificate data.

\clearpage
\section{Computational results}\label{sec:results}
Table~\ref{tab:results} lists the certified bounds in the
\ImprovementCount{} dimensions with a claimed improvement:
\ImprovementDimensions, together with comparisons in dimensions $9$ and $10$.
For $9\le n\le16$, the comparison values are the rigorous three-point bounds in
Cohn--de Laat--Salmon~\cite[Table~1.1]{CDS}. Tutunov et al.~\cite[Table~1]{Tutunov}
report slightly smaller numerical values; we use the rigorously verified
values of~\cite{CDS} for comparison.
For $n\ge17$, we use numerical estimates from the highest available
truncations of the dual solutions in~\cite{AJ} (in dimensions $17$--$91$,
the estimates from the two largest truncations differ by less than
$10^{-20}$).

The lower-bound column records the best lattice densities reported in the
Nebe--Sloane catalogue~\cite{NSCatalogue}, with the improvements in dimensions
$38$--$40$ from~\cite{DSW}, and the entries in dimensions $54$, $56$, $64$,
and $72$ from~\cite{Leterrier,CohnTable}.
We convert center density $\delta$ to packing density $v_n\delta$ and round
down to six significant digits. Nonlattice records are excluded; a dash
means that no sufficiently substantiated lattice-record value is supplied.

For $n = 9$, Dutour Sikiri\'c and
van Woerden's preprint~\cite[Corollary~1]{DutourVanWoerden} establishes
$\Delta_9^{\rm lat}=\sqrt2\,\pi^4/945$. Together with Mordell's inequality,
this yields
$\Delta_{10}^{\rm lat}\le\pi^5/(120\cdot2^{35/8})<0.122903$, so our bounds do not improve the strongest available lattice bounds in dimensions $9$ and $10$.

\begingroup
\small
\setlength{\tabcolsep}{4pt}
\begin{longtable}{crrrr}
\caption{Lattice packing density bounds. The comparison bounds are rigorous
three-point bounds for $9\le n\le16$ and numerical Cohn--Elkies values for
$n\ge17$. Gains are relative to this comparison column.}\label{tab:results}\\
\toprule $n$ & Best known lattice & Our bound & \shortstack{Three-point bound/\\Cohn--Elkies bound} & Gain (\%)\\\midrule
\endfirsthead
\toprule $n$ & Best known lattice & Our bound & \shortstack{Three-point bound/\\Cohn--Elkies bound} & Gain (\%)\\\midrule
\endhead
\bottomrule\endfoot
9 & $0.145774$ & $0.184380634$ & $0.191121$ & $3.527$\\
10 & $0.0920211$ & $0.132492965$ & $0.143411$ & $7.613$\\
11 & $0.0604326$ & $0.0949892088$ & $0.106726$ & $11.00$\\
12 & $0.0494541$ & $0.0680145536$ & $0.079712$ & $14.67$\\
13 & $0.0292084$ & $0.0488231948$ & $0.060165$ & $18.85$\\
14 & $0.0216240$ & $0.0352392080$ & $0.045062$ & $21.80$\\
15 & $0.0168575$ & $0.0256118267$ & $0.033757$ & $24.13$\\
16 & $0.0147081$ & $0.0187461206$ & $0.024995$ & $25.00$\\
17 & $0.00881131$ & $0.0138839313$ & $0.01846409034$ & $24.81$\\
18 & $0.00592836$ & $0.0103704834$ & $0.01348534045$ & $23.10$\\
19 & $0.00412080$ & $0.00782447361$ & $0.009817955140$ & $20.30$\\
20 & $0.00322586$ & $0.00595808443$ & $0.007127053603$ & $16.40$\\
21 & $0.00246588$ & $0.00456045297$ & $0.005159660395$ & $11.61$\\
22 & $0.00212766$ & $0.00348596988$ & $0.003725941969$ & $6.441$\\
23 & $0.00190532$ & $0.00264134013$ & $0.002684279886$ & $1.600$\\
25 & $0.000677212$ & $0.00132930810$ & $0.001384190722$ & $3.965$\\
26 & $0.000269220$ & $0.000915119176$ & $0.0009910238892$ & $7.659$\\
27 & $0.000128675$ & $0.000631342154$ & $0.0007082297959$ & $10.86$\\
28 & $6.97587\times10^{-5}$ & $0.000437000598$ & $0.0005052542161$ & $13.51$\\
29 & $2.78789\times10^{-5}$ & $0.000303842544$ & $0.0003598581852$ & $15.57$\\
30 & $1.44286\times10^{-5}$ & $0.000212402597$ & $0.0002559028744$ & $17.00$\\
31 & $1.18377\times10^{-5}$ & $0.000149408588$ & $0.0001817083814$ & $17.78$\\
32 & $1.10407\times10^{-5}$ & $0.000105764483$ & $0.0001288432888$ & $17.91$\\
33 & $4.14068\times10^{-6}$ & $7.55167890\times10^{-5}$ & $9.123560390\times10^{-5}$ & $17.23$\\
34 & $1.76697\times10^{-6}$ & $5.43081491\times10^{-5}$ & $6.452219674\times10^{-5}$ & $15.83$\\
35 & $9.46190\times10^{-7}$ & $3.93266737\times10^{-5}$ & $4.557438431\times10^{-5}$ & $13.71$\\
36 & $6.16146\times10^{-7}$ & $2.86680609\times10^{-5}$ & $3.215305533\times10^{-5}$ & $10.84$\\
37 & $3.21356\times10^{-7}$ & $2.09981537\times10^{-5}$ & $2.265869001\times10^{-5}$ & $7.328$\\
38 & $2.17566\times10^{-7}$ & $1.52662254\times10^{-5}$ & $1.595064991\times10^{-5}$ & $4.291$\\
39 & $1.68028\times10^{-7}$ & $1.09486830\times10^{-5}$ & $1.121686870\times10^{-5}$ & $2.391$\\
40 & $1.47983\times10^{-7}$ & $7.82816106\times10^{-6}$ & $7.880105170\times10^{-6}$ & $0.6592$\\
43 & $4.01571\times10^{-8}$ & $2.70203239\times10^{-6}$ & $2.716907473\times10^{-6}$ & $0.5475$\\
44 & $3.20085\times10^{-8}$ & $1.86230595\times10^{-6}$ & $1.901770314\times10^{-6}$ & $2.075$\\
45 & $2.52315\times10^{-8}$ & $1.28034519\times10^{-6}$ & $1.330090566\times10^{-6}$ & $3.740$\\
46 & $2.27179\times10^{-8}$ & $8.80676408\times10^{-7}$ & $9.295151556\times10^{-7}$ & $5.254$\\
47 & $2.14660\times10^{-8}$ & $6.07126117\times10^{-7}$ & $6.490757337\times10^{-7}$ & $6.463$\\
48 & $2.31782\times10^{-8}$ & $4.19948220\times10^{-7}$ & $4.529067790\times10^{-7}$ & $7.277$\\
49 & \textemdash & $2.91698991\times10^{-7}$ & $3.157963968\times10^{-7}$ & $7.631$\\
50 & \textemdash & $2.03458140\times10^{-7}$ & $2.200392958\times10^{-7}$ & $7.536$\\
51 & \textemdash & $1.42487373\times10^{-7}$ & $1.532136244\times10^{-7}$ & $7.001$\\
52 & \textemdash & $1.00209490\times10^{-7}$ & $1.066123391\times10^{-7}$ & $6.006$\\
53 & \textemdash & $7.07065699\times10^{-8}$ & $7.413767463\times10^{-8}$ & $4.628$\\
54 & $1.46699\times10^{-10}$ & $4.97631903\times10^{-8}$ & $5.152285374\times10^{-8}$ & $3.415$\\
55 & \textemdash & $3.48109436\times10^{-8}$ & $3.578475170\times10^{-8}$ & $2.721$\\
56 & $5.35530\times10^{-11}$ & $2.44093782\times10^{-8}$ & $2.483937099\times10^{-8}$ & $1.731$\\
57 & \textemdash & $1.71634335\times10^{-8}$ & $1.723196004\times10^{-8}$ & $0.3977$\\
62 & \textemdash & $2.74632309\times10^{-9}$ & $2.746644716\times10^{-9}$ & $0.01171$\\
63 & \textemdash & $1.89257793\times10^{-9}$ & $1.899489162\times10^{-9}$ & $0.3638$\\
64 & $1.32606\times10^{-12}$ & $1.29978772\times10^{-9}$ & $1.312998022\times10^{-9}$ & $1.006$\\
65 & \textemdash & $8.92325642\times10^{-10}$ & $9.071711718\times10^{-10}$ & $1.636$\\
66 & \textemdash & $6.13061198\times10^{-10}$ & $6.264943126\times10^{-10}$ & $2.144$\\
67 & \textemdash & $4.21802530\times10^{-10}$ & $4.324664005\times10^{-10}$ & $2.466$\\
68 & \textemdash & $2.90120314\times10^{-10}$ & $2.984004045\times10^{-10}$ & $2.775$\\
69 & \textemdash & $2.00732130\times10^{-10}$ & $2.058081299\times10^{-10}$ & $2.466$\\
70 & \textemdash & $1.38650284\times10^{-10}$ & $1.418880624\times10^{-10}$ & $2.282$\\
71 & \textemdash & $9.60867646\times10^{-11}$ & $9.778075467\times10^{-11}$ & $1.732$\\
72 & $1.45861\times10^{-13}$ & $6.66598519\times10^{-11}$ & $6.735796328\times10^{-11}$ & $1.036$\\
73 & \textemdash & $4.62232194\times10^{-11}$ & $4.638272400\times10^{-11}$ & $0.3439$\\
74 & \textemdash & $3.18709121\times10^{-11}$ & $3.192705609\times10^{-11}$ & $0.1759$\\
75 & \textemdash & $2.19646063\times10^{-11}$ & $2.196849358\times10^{-11}$ & $0.01769$\\
83 & \textemdash & $1.08926359\times10^{-12}$ & $1.090200461\times10^{-12}$ & $0.08594$\\
84 & \textemdash & $7.47188544\times10^{-13}$ & $7.478849669\times10^{-13}$ & $0.09312$\\
85 & \textemdash & $5.10524957\times10^{-13}$ & $5.128979896\times10^{-13}$ & $0.4627$\\
86 & \textemdash & $3.48807350\times10^{-13}$ & $3.516392699\times10^{-13}$ & $0.8053$\\
87 & \textemdash & $2.38928356\times10^{-13}$ & $2.410106753\times10^{-13}$ & $0.8640$\\
88 & \textemdash & $1.63824494\times10^{-13}$ & $1.651391388\times10^{-13}$ & $0.7961$\\
89 & \textemdash & $1.12348125\times10^{-13}$ & $1.131203657\times10^{-13}$ & $0.6827$\\
90 & \textemdash & $7.71724614\times10^{-14}$ & $7.746593987\times10^{-14}$ & $0.3788$\\
91 & \textemdash & $5.30260966\times10^{-14}$ & $5.303494178\times10^{-14}$ & $0.01668$\\
\end{longtable}
\endgroup


\clearpage
\begin{thebibliography}{99}
\raggedright
\bibitem{AJ}
N.~Afkhami-Jeddi, H.~Cohn, T.~Hartman, D.~de Laat, and A.~Tajdini,
\emph{High-dimensional sphere packing and the modular bootstrap},
J. High Energy Phys. (2020), no.~12, 066;
\href{https://arxiv.org/abs/2006.02560}{arXiv:2006.02560}.
Data: \url{https://hdl.handle.net/1721.1/125646}.
\bibitem{CE}
H.~Cohn and N.~Elkies, \emph{New upper bounds on sphere packings I},
Ann. of Math. \textbf{157} (2003), 689--714;
\href{https://arxiv.org/abs/math/0110009}{arXiv:math/0110009}.
\bibitem{CDS}
H.~Cohn, D.~de Laat, and A.~Salmon, \emph{Three-point bounds for sphere packing},
\href{https://arxiv.org/abs/2206.15373v1}{arXiv:2206.15373v1} (2022).
\bibitem{CK}
H.~Cohn and A.~Kumar, \emph{Optimality and uniqueness of the Leech lattice
among lattices}, Ann. of Math. \textbf{170} (2009), 1003--1050;
\href{https://arxiv.org/abs/math/0403263}{arXiv:math/0403263}.
\bibitem{Martinet}
J.~Martinet, \emph{Reduction modulo 2 and 3 of Euclidean lattices},
J. Algebra \textbf{251} (2002), 864--887.
Author's version: \url{https://jamartin.perso.math.cnrs.fr/Publications/Latmod.pdf}.
\bibitem{MartinetErr}
J.~Martinet, \emph{Lattices modulo 2 and 3 (comments)}, updated March~5, 2022,
\url{https://jamartin.perso.math.cnrs.fr/Publications/latmodcomm.pdf}.
\bibitem{Tutunov}
R.~Tutunov, A.~Maraval, A.~Grosnit, X.~Li, J.~Wang, and H.~Bou-Ammar,
\emph{Model-Based and Sample-Efficient AI-Assisted Math Discovery in Sphere Packing},
\href{https://arxiv.org/abs/2512.04829v2}{arXiv:2512.04829v2} (2025).
\bibitem{NSCatalogue}
G.~Nebe and N.~J.~A.~Sloane, \emph{Table of densest packings presently known},
Catalogue of Lattices,
\url{https://www.math.rwth-aachen.de/~Gabriele.Nebe/LATTICES/density.html}.
\bibitem{DSW}
I.~Dorofeev, X.~Sun, and C.~Wang,
\emph{Optimal extensions of cross-sections: Sphere packings in dimensions 38 to 43},
\href{https://arxiv.org/abs/2607.20359v4}{arXiv:2607.20359v4} (2026).
\bibitem{Leterrier}
G.~Leterrier, \emph{On the Mordell--Weil lattice of
$y^2=x^3+bx+t^{3^n+1}$ in characteristic 3},
Res. Number Theory \textbf{8} (2022), article 23,
\href{https://doi.org/10.1007/s40993-022-00321-0}{doi:10.1007/s40993-022-00321-0}.
\bibitem{CohnTable}
H.~Cohn, \emph{Table of sphere packing density bounds},
\url{https://cohn.mit.edu/sphere-packing/}.
\bibitem{DutourVanWoerden}
M.~Dutour Sikiri\'c and W.~van Woerden,
\emph{The lattice packing problem in dimension 9 by Voronoi's algorithm},
preprint (2025),
\href{https://arxiv.org/abs/2508.20719}{arXiv:2508.20719}.
\end{thebibliography}
\end{document}